\def\@tocline#1#2#3#4#5#6#7{\relax
  \ifnum #1>\c@tocdepth 
  \else
    \par \addpenalty\@secpenalty\addvspace{#2}%
    \begingroup \hyphenpenalty\@M
    \@ifempty{#4}{%
      \@tempdima\csname r@tocindent\number#1\endcsname\relax
    }{%
      \@tempdima#4\relax
    }%
    \parindent\z@ \leftskip#3\relax \advance\leftskip\@tempdima\relax
    \rightskip\@pnumwidth plus4em \parfillskip-\@pnumwidth
    #5\leavevmode\hskip-\@tempdima
      \ifcase #1
       \or\or \hskip 1em \or \hskip 2em \else \hskip 3em \fi%
      #6\nobreak\relax
    \hfill\hbox to\@pnumwidth{\@tocpagenum{#7}}\par
    \nobreak
    \endgroup
  \fi}
\theoremstyle{plain}
   \newtheorem{theorem}{Theorem}
\theoremstyle{definition}
    \newtheorem{definition}{Definition}
    \newtheorem{lemma}[theorem]{Lemma}
\newtheorem{example}{Example}
\theoremstyle{remark}
\newtheorem{remark}{Remark}
\newcommand{\NN}{\mathbb{N}}
\newcommand{\RR}{\mathbb{R}}
\begin{document}

\title{An introduction to geodesics: the shortest distance between two points}
\author{Andrew Tawfeek}
\address{Bristol Community College, 777 Elsbree St, Fall River, MA}
\email{atawfeek11@bristolcc.edu}
\maketitle

\begin{abstract}
We give an accessible introduction and elaboration on the methods used in obtaining a geodesic, which is the curve of shortest length connecting two points lying on the surface of a function. This is found through computing what's known as the variation of a functional, a ``function of functions'' of sorts. Geodesics are of great importance with wide applications, e.g. dictating the path followed by aircraft (great-circles), how light travels through space, assist in the process of mapping a 2D image to a 3D surface, and robot motion planning.
\end{abstract}

\tableofcontents

\section{Introduction}
Let $\mathcal{S}$ be a surface defined by a vector equation $\vec{r} (u,v)$. A \textit{geodesic} is the shortest curve lying on $\mathcal{S}$ connecting two points on the surface of $\mathcal{S}$. This is done by minimizing the functional 
$$J[u,v]=\int_{t_0}^{t_1} \sqrt{Eu'^2+2Fu'v'+Gv'^2} \ dt,$$
where $E$, $F$, and $G$ are the coefficients of the \textit{first fundamental (quadratic) form of the surface}.

In this paper we provide the necessary background information needed to understand the concept of a geodesic, rigorously proving each step along the way. In Section 4, multiple Lemmas are put forth concerning function spaces which later greatly assist us in Section 5 in proving that a necessary condition for a function to be an extremum for a functional is for it to satisfy the Euler-Lagrange Differential Equation. Section 6 dives heavily into the concept of a geodesic and provides two different methods of obtaining it; followed by a few examples of both methods in use.

\

\textbf{Acknowledgments.} I wish to thank the Bristol Community College Honors Program for providing this unique opportunity to spend my Fall 2016 semester completing this honors component for my Calculus III course. I am indebted to Zachary Wolfson, my advisor for this project, for his guidance and support at every step, as well as shaping the way I think of mathematics today.

\section{Function Spaces}
Analogous to the study of $n$-variable functions, where $(x_1 \ ,x_2 \ , \dots ,x_n) \in \mathbb{R}^n$, function spaces employ similar concepts. Each function $y(x)$ is regarded to belong to a certain function space containing other functions with similar properties. Spaces whose elements are functions are called \textit{function spaces}.

\

In order to properly discuss function spaces, we must first introduce the concepts of a \textit{norm} and a \textit{normed linear space}.

\begin{definition}
A \textit{linear space} is a set $\Re$ of elements $x$ of any kind (numbers, vectors, matrices, functions, etc.), for which the operations of addition and multiplication by real numbers $a, \ b, \ \dots$ are defined and adhere to the following axioms:
\begin{enumerate} 
\item   $x_i + x_j = x_j + x_i$;
\item $(x_i + x_j) + x_k = x_i+ (x_j+x_k)$;
\item  There exits an element 0 such that $x_i + 0 = x_i$;
\item There exists an element $-x_i$ such that $x_i+(-x_i)=0$;
\item $1 \cdot x_y = x_i$;
\item $a(bx_i)=(ab)x_i$;
\item $(a+b)x_i = ax_i + bx_i$;
\item $a(x_i + x_j) = ax_i + ax_j$.
\end{enumerate}
\end{definition}
\begin{definition}
A linear space $\Re$ is said to be \textit{normed}\footnote[1]{Observe, for example, that for a linear space of vectors, the norm is their magnitude, and a for a linear space of numbers, it would just be their absolute value.} if each element $x \in \Re$ is assigned a non-negative number $||x||$, called the \textit{norm} of $x$, such that:
\begin{enumerate}
\item $||x|| = 0$ if and only if $x=0$;
\item $||ax|| = |a| \cdot ||x||$;
\item $||x+y|| \leq ||x|| + ||y||$.
\end{enumerate}
\end{definition}

Notice that by defining $||x-y||$ as the distance between $x$ and $y$, the norm induces a metric, that is, it satisfies the conditions:
\begin{enumerate}
\item $||x+y|| = 0$ if, and only if, $x=y$;
\item $||x+y|| = ||x+y||$;
\item $||x+y|| + ||y+z|| \geq ||x+z||$.
\end{enumerate}

The following normed linear spaces are important for definitions and theorems to come:
\begin{enumerate}
\item The space $C$, or more precisely $C(a,b)$, consisting of all continuous functions $y(x)$ defined on an interval $a \leq x \leq b$. By addition and multiplication of elements of $C$, we mean ordinary addition of functions and multiplication of functions by numbers. The norm of this space is defined as the maximum of the absolute value, i.e.
$$||y||_0 = \underset{a \leq x \leq b}{max} \ |y(x)|.$$

Which satisfies the propeties of a norm due to the properties of absolute value. Consequentially, within the space $C$, the distance $||y_1(x)-y_2(x)||_0$ between the functions $y_1(x)$ and $y_2(x)$ does not exceed $\epsilon$ if the graph of $y_2(x)$ lies within a strip of width $2 \epsilon$ centered at $y_1(x)$.

\

\item The space $D_1$, or $D_1(a,b)$, consisting of all continuous functions $y(x)$ defined on an interval $a \leq x \leq b$ with continuous first derivatives. The operations of addition and multiplication are the same as $C$, but the norm is defined as
$$||y||_1 = \underset{a \leq x \leq b}{max} \ |y(x)| + \underset{a \leq x \leq b}{max} \ |y'(x)| .$$
Hence, two functions in  are regarded as close if both the functions and their derivatives are close together, that is, $||y-z||_1 < \epsilon$ implies that for all $x \in [a,b]$ that $|y(x)-z(x)| < \epsilon$ that $|y(x)-z(x)| < \epsilon$ and $|y'(x)-z'(x)| < \epsilon$.

\

\item The space $D_n$, or $D_n(a,b)$, consisting of all continuous functions $y(x)$ defined on an interval $a \leq x \leq b$ which have continuous derivatives up to $n$, where $n \in \NN$.\footnote[2]{Note that $D_n \subset D_{n-1} \subset \dots \subset D_1 \subset C$.} Addition and multiplication of elements of $D_n$ are defined just as previous cases, but the norm is defined as
$$||y||_n=\sum_{i=0}^n  \ \underset{a \leq x \leq b}{max} \ |y^{(i)}(x)|.$$
Thus, two functions in $D_n$ are said to be close together if the values of the functions and all their derivatives up to $n$ are close together.
\end{enumerate}

\section{Functionals}
Functionals are the main study of Calculus of Variations as well as our tool for obtaining the geodesic on the surface of a function. An interpretation of this is that a functional is a kind of function, where the independent variable itself is a function (or curve).
\begin{definition}
A functional is a correspondence which assigns a definite (real) number to each function (or curve) belonging to some class, that is, $J: D_n \rightarrow \RR$, where we chose a reasonable integer $n$ such that $J$ is continuous.
\end{definition}

\begin{definition}
The functional $J[y]$ is said to be continuous at the point $\hat{y} \in \Re $ if for any $\epsilon > 0$, there is a $\delta > 0$ such that
$$|J[y]-J[\hat{y}]| < \epsilon$$
provided that $||y-\hat{y}||<\delta$.
\end{definition}

As a general example, the expression
$$J[y]=\int_a^b F(x,y,y') \ dx$$
where $y(x)$ ranges over the set of all continuously differentiable functions on $[a,b]$, defines a functional.

The integrand is known as the Lagrangian, and we usually assume $L(x,u,p)$ to be a reasonably smooth function of all three of its (scalar) variables $x, \ u,$ and $p$ representing $\frac{du}{dx}=u'$. By choosing different Lagrangians we can generate different functions. Here are some examples, the last of which is the two-dimensional analogue of our three-dimensional geodesic problem.
\begin{example}
Let $L(x,u,p) = p^2$ and $y(x)$ be an arbitrary continuously differentiable function defined on $[a,b]$. We then have
$$\int_a^b L(x,u,p) \ dx = \int_a^b \left( \frac{dy}{dx} \right) ^2  dx.$$
\end{example}

\begin{example}
Let $L(x,u,p) = \sqrt{1+p^2} \ dx$. The corresponding functional is the length of the plane curve $y=y(x)$ from \textbf{a} to \textbf{b}
$$\int_a^b \sqrt{1+\left( \frac{dy}{dx} \right)^2} dx.$$
\end{example}

In order to establish the proper connection between functionals of Calculus of Variations and the functions treated in Analysis, we may proceed as follows:

\

Consider the aforementioned functional $J[y]=\int_a^b F(x,y,y') \ dx$ with $y(a) = A$ and $y(b) = B$. Using the points
$$a = x_0, \ x_1, \ \dots , \ x_i, \ \dots , \ x_n, \ x_{n+1} = b$$
we divide the interval $[a,b]$ into $n+1$ equal parts. We then replace the curve $y=y(x)$ by the polygonal line with vertices
$$(x_0, A), \ (x_1, y(x_1)), \ \dots , \ (x_n,y(x_n)), \ (x_{n+1}, B)$$
and the approximation of the functional $J[y]$ would be given by the sum
$$J(y_1, \ \dots , \ y_n) = \sum_{i=1}^{n+1} \ F(x_i, y_i,  \frac{y_i-y_{i-1}}{h}) \Delta x$$
where $y_i=y(x_i)$ and $\Delta x = x - x_{i-1}$. \\

The above sum is therefore a function of the $n$ variables $y_1, \ \dots , \ y_n$ and the exact value of the functional can be found by taking $\underset{n \rightarrow \infty}{\lim}$. In this sense, functionals can be regarded as ``functions of infinitely many variables.''

\

The reason the above analogy works so well is because functionals of the type $J[y]=\int_a^b F(x,y,y') \ dx$ have a \textit{``localization property''} consisting of the fact that if we divide the curve $y=y(x)$ into parts and calculate the value of the functional of each part, the sum of the values of the functional for seperate parts equals the value of the functional for the whole curve\footnote[3]{Only functionals with the localization property are usually considered in Calculus of Variations.}.
\begin{example}
The following functional does not uphold the localization property. Assuming the curve $y(x)$, where $a \leq x \leq b$, is made of some homogeneous material, then the $x$-coordinate of the center of mass is provided by the functional
$$\frac{\int_a^b x \sqrt{1+ \left( \frac{dy}{dx} \right)^2} dx}{ \int_a^b \sqrt{1+ \left( \frac{dy}{dx} \right)^2} dx }.$$
\end{example}

\section{Variation of a Functional}
The concept of the variation is analogous to the concept of the differential of a function of $n$-variables, that is, just as setting $\frac{dy}{dx}=0$ assists in finding the extrema of $y=y(x)$, as does setting ``$\delta J = 0$'' help locate the extrema of $J=J[y]$.  This concept of $\delta J$ will be discussed at the end of the section.

\

We begin by introducing continuous linear functionals, then state a handful of important lemmas the result from our definition of function spaces.
\begin{definition}
Given a normed linear space $\Re$, let each element $h \in \Re$ be assigned a number $\phi [h]$, i.e., let $\phi [h]$ be a functional defined on $\Re$. Then $\phi [h]$ is said to be a (continuous) \textit{linear functional} if
\begin{enumerate}
\item $\phi[ah] = a\phi[h]$;
\item $\phi[h_1 + h_2] = \phi[h_1] + \phi[h_2]$;
\item $\phi[h]$ is continuous for all $h \in \Re$.
\end{enumerate}
\end{definition}

\noindent We shall make use of the following lemmas.
\begin{lemma}
If $\alpha(x)$ is continuous in $[a,b]$, and if
$$\int_a^b \alpha(x)h(x) \ dx = 0$$
for every function $h(x) \in C(a,b)$ such that $h(a)=h(b)=0$, then, for all $x \in [a,b]$, $\alpha(x) = 0$.
\end{lemma}
\begin{proof}
Suppose the function $\alpha(x)$ is positive and nonzero in some point within $[a,b]$. Then $\alpha(x)$ is also positive in some interval $[x_1,x_2] \subseteq [a,b]$. If we set
$$h(x) = (x-x_1)(x_2-x)$$
for $x \in [x_1,x_2]$ and $h(x)=0$ otherwise, then $h(x)$ would clearly satisfy the conditions of the lemma. However, since
$$\int_a^b \alpha(x)h(x) \ dx = \int_{x_1}^{x_2} \alpha(x)(x-x_1)(x_2-x)(x) \ dx > 0$$
(except at $x_1$ and $x_2$), this is a contradiction.
\end{proof}

\begin{lemma}
If $\alpha(x)$ is continuous in $[a,b]$, and if
$$\int_a^b \alpha(x)h'(x) \ dx = 0$$
for every function $h(x) \in D_1(a,b)$ such that $h(a)=h(b)=0$, then, for all $x \in [a,b]$, $\alpha(x) = c$, where $c$ is a constant.
\end{lemma}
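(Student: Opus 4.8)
The plan is to adapt the test-function strategy of the previous lemma, but with one essential twist: because the hypothesis now controls $\int_a^b \alpha(x)h'(x)\,dx$ rather than $\int_a^b \alpha(x)h(x)\,dx$, I cannot simply insert an arbitrary bump function. Instead I must first \emph{guess} the value of the constant $c$ and then engineer a test function $h$ whose derivative exposes the deviation $\alpha - c$. The natural guess is the average value
$$c = \frac{1}{b-a}\int_a^b \alpha(x)\,dx,$$
together with the associated test function
$$h(x) = \int_a^x \bigl(\alpha(\xi) - c\bigr)\,d\xi.$$
First I would verify that $h$ is admissible: by the Fundamental Theorem of Calculus $h'(x) = \alpha(x) - c$ is continuous (since $\alpha$ is), so $h \in D_1(a,b)$; moreover $h(a)=0$ trivially, while $h(b) = \int_a^b (\alpha(\xi)-c)\,d\xi = 0$ precisely because of the averaging choice of $c$. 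Hence $h$ meets the boundary conditions $h(a)=h(b)=0$ demanded by the hypothesis.

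The heart of the argument is then a single two-sided computation. On the one hand, expanding the integrand and invoking both the hypothesis and the Fundamental Theorem of Calculus,
$$\int_a^b \bigl(\alpha(x)-c\bigr)h'(x)\,dx = \int_a^b \alpha(x)h'(x)\,dx - c\bigl(h(b)-h(a)\bigr) = 0.$$
On the other hand, substituting $h'(x)=\alpha(x)-c$ directly gives
$$\int_a^b \bigl(\alpha(x)-c\bigr)h'(x)\,dx = \int_a^b \bigl(\alpha(x)-c\bigr)^2\,dx.$$
Equating the two forms forces $\int_a^b (\alpha(x)-c)^2\,dx = 0$. Since $(\alpha(x)-c)^2$ is continuous and nonnegative with vanishing integral over $[a,b]$, it must vanish identically (the same positivity principle underlying the previous lemma), whence $\alpha(x) = c$ for all $x \in [a,b]$.

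The main obstacle, and the only genuinely creative step, is discovering the correct pair $c$ and $h$ simultaneously: the averaging value of $c$ is exactly what makes $h(b)=0$ so that $h$ is admissible, and the square-completion trick is exactly what converts the hypothesis into a nonnegative integral that is then forced to be zero. Everything after that choice is routine. I would take care to note that the argument as written assumes one can integrate by parts or apply the Fundamental Theorem to $h$; this is justified because $h'=\alpha-c$ is continuous by construction, so no extra smoothness on $\alpha$ beyond the stated continuity is required.
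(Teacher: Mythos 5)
Your proposal is correct and follows essentially the same route as the paper's proof: choosing $c$ to be the average value of $\alpha$ (equivalently, defining it by $\int_a^b[\alpha(x)-c]\,dx=0$), taking $h(x)=\int_a^x(\alpha(\xi)-c)\,d\xi$ as the test function, and comparing the two evaluations of $\int_a^b(\alpha(x)-c)h'(x)\,dx$ to force $\int_a^b(\alpha(x)-c)^2\,dx=0$. Your write-up is in fact slightly more complete than the paper's, since you explicitly justify the final step that a continuous nonnegative function with zero integral must vanish identically.
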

\begin{proof}
Let $c$ be the constant defined by the condition
$$\int_a^b [\alpha(x) - c] \ dx = 0,$$
and let
$$h(x) = \int_a^x [\zeta(x) - c] \ d\zeta,$$
so that $h(x)$ automatically belongs to $D_1(a,b)$ and satisfies the conditions $h(a)=h(b)=0$. Then on one hand,
$$\int_a^b [\alpha(x) - c]h'(x) \ dx = \int_a^b \alpha(x) h'(x) dx - c[h(b)-h(a)] = 0,$$
while on the other hand,
$$\int_a^b [\alpha(x) - c]h'(x) \ dx = \int_a^b [\alpha(x)-c]^2 \ dx.$$
It follows that $\alpha(x)=c$ for all $x \in [a,b]$.
\end{proof}

\begin{lemma}
If $\alpha(x)$ is continuous in $[a,b]$, and if
$$\int_a^b \alpha(x)h''(x) \ dx = 0$$
for every function $h(x) \in D_2(a,b)$ such that $h(a)=h(b)=0$ and $h'(a)=h'(b)=0$, then, for all $x \in [a,b]$, $\alpha(x) = c_0 + c_1x$, where $c_0$ and $c_1$ are constants.
\end{lemma}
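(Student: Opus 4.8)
The plan is to mirror the proof of the preceding lemma, but to accommodate the \emph{four} endpoint conditions on $h$ (now $h(a)=h(b)=0$ together with $h'(a)=h'(b)=0$) I would introduce \emph{two} constants $c_0,c_1$ and try to manufacture an admissible $h$ whose second derivative is exactly the candidate $\psi(x):=\alpha(x)-c_0-c_1x$. Concretely, I would set
$$h(x)=\int_a^x\!\!\int_a^t \psi(\zeta)\,d\zeta\,dt,$$
so that $h''=\psi$ is continuous (hence $h\in D_2(a,b)$) and the two conditions $h(a)=0$ and $h'(a)=0$ hold automatically by construction.

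The remaining conditions $h'(b)=0$ and $h(b)=0$ then have to be arranged by the choice of $c_0,c_1$. Since $h'(x)=\int_a^x\psi\,d\zeta$, the first is $\int_a^b\psi\,dx=0$; and after one integration by parts the second reduces (using the first) to $\int_a^b x\,\psi(x)\,dx=0$. Writing these out in terms of $\alpha$, they form a $2\times 2$ linear system in the unknowns $c_0,c_1$ with coefficient matrix
$$\begin{pmatrix} b-a & \tfrac{b^2-a^2}{2} \\[2pt] \tfrac{b^2-a^2}{2} & \tfrac{b^3-a^3}{3}\end{pmatrix}.$$

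The step I expect to be the crux is verifying that this system is solvable, i.e.\ that the matrix above is nonsingular; without this there is no guarantee a valid $h$ exists. I would dispatch it by recognizing it as the Gram matrix of the linearly independent functions $1$ and $x$ on $[a,b]$, hence positive definite with positive determinant, so a unique pair $(c_0,c_1)$ exists. With that choice the $h$ constructed above is admissible, and I would finish as in Lemma 2: integrating by parts and invoking all four boundary conditions gives $\int_a^b (c_0+c_1x)\,h''\,dx=0$, so subtracting this from the hypothesis $\int_a^b \alpha\,h''\,dx=0$ yields $\int_a^b \psi\,h''\,dx=\int_a^b \psi^2\,dx=0$. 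Continuity of $\psi$ then forces $\psi\equiv 0$ (the same contradiction argument as in Lemma 1), i.e.\ $\alpha(x)=c_0+c_1x$ for all $x\in[a,b]$.
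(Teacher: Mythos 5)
Your proposal is correct and follows essentially the same route as the paper's proof: define $c_0,c_1$ so that the twice-iterated integral of $\psi=\alpha-c_0-c_1x$ is an admissible $h$ with $h''=\psi$, then evaluate $\int_a^b\psi h''\,dx$ two ways to conclude $\int_a^b\psi^2\,dx=0$. The one genuine addition is your Gram-matrix argument showing the $2\times 2$ system for $(c_0,c_1)$ is nonsingular — the paper simply asserts that such constants exist — so your write-up actually closes a small gap the paper leaves implicit.
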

\begin{proof}
Let $c_0$ and $c_1$ be defined by the conditions
$$\int_a^b [\alpha(x) - c_0-c_1x] \ dx = 0,$$
$$\int_a^b \ dx \ \int_a^x [\alpha(\zeta) - c_0-c_1\zeta] \ dx = 0,$$
and let
$$h(x) = \int_a^x \ d\zeta \ \int_a^\zeta [a(t) -c_0 - c_1] \ dt,$$
so that $h(x)$ automatically belongs to $D_2(a,b)$ and satisfies the conditions $h(a)=h(b)=0$ and $h'(a)=h'(b)=0$. We have, on one hand,
$$\int_a^b [\alpha(x) - c_0 - c_1 x]h''(x) \ dx$$
$$= \int_a^b \alpha(x)h''(x) \ dx - c_0[h'(b)-h'(a)]-c_1 \int_a^b xh''(x) \ dx$$
$$= -c_1 [bh'(b) - ah'(a)] - c_1[h(b)-h(a)] = 0,$$
while on the other hand,
$$\int_a^b [\alpha(x) - c_0 - c_1x]h''(x) \ dx = \int_a^b [\alpha(x) - c_0 - c_1 x]^2 \ dx = 0.$$
It follows that $\alpha(x) - c_0 - c_1x = 0$, i.e., $\alpha(x)=c_0 + c_1x$, for all $x$ in $[a,b]$.
\end{proof}

\begin{lemma}
If $\alpha(x)$ and $\beta(x)$ are continuous in $[a,b]$, and if
$$\int_a^b [ \alpha (x) h(x) + \beta (x) h'(x) ] \ dx = 0$$
for every function $h(x) \in D_1(a,b)$ such that $h(a)=h(b)=0$, then $\beta(x)$ is differentiable and $\beta '(x) = \alpha(x)$ for all $x \in  [a,b]$.
\end{lemma}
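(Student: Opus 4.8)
The plan is to reduce this statement to the second lemma above---the one asserting that $\int_a^b \alpha(x) h'(x)\, dx = 0$ for all admissible $h$ forces $\alpha$ to be constant---by trading the term involving $h(x)$ for one involving $h'(x)$. The obstacle to a naive approach is that I cannot integrate the $\beta(x) h'(x)$ term by parts, since $\beta$ is not yet known to be differentiable; its differentiability is precisely the conclusion I am after. Instead I will transform the $\alpha(x) h(x)$ term, which is legitimate because $\alpha$ is assumed continuous.

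First I would introduce the antiderivative $A(x) = \int_a^x \alpha(\xi)\, d\xi$, which is well-defined and continuously differentiable on $[a,b]$ with $A'(x) = \alpha(x)$ by the Fundamental Theorem of Calculus, since $\alpha$ is continuous. Integrating by parts gives
$$\int_a^b \alpha(x) h(x)\, dx = \big[ A(x) h(x) \big]_a^b - \int_a^b A(x) h'(x)\, dx,$$
and the boundary term vanishes because $h(a) = h(b) = 0$. Substituting this into the hypothesis, the condition becomes
$$\int_a^b \big[ \beta(x) - A(x) \big] h'(x)\, dx = 0$$
for every $h \in D_1(a,b)$ with $h(a) = h(b) = 0$.

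Now $\beta - A$ is continuous on $[a,b]$, being a difference of continuous functions, so it is eligible for the second lemma above, which then yields $\beta(x) - A(x) = c$ for some constant $c$ and all $x \in [a,b]$. Hence $\beta(x) = A(x) + c$. Since $A$ is differentiable with $A'(x) = \alpha(x)$, it follows immediately that $\beta$ is differentiable and $\beta'(x) = \alpha(x)$ throughout $[a,b]$, as claimed. The only points requiring care are verifying that the boundary term in the integration by parts genuinely vanishes (it does, by the endpoint conditions on $h$) and confirming that $\beta - A$ qualifies as the continuous function fed into the earlier lemma; once these are in place, the conclusion follows as a direct chain of implications.
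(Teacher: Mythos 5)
Your proposal is correct and follows essentially the same route as the paper: both introduce the antiderivative $A(x)=\int_a^x \alpha(\xi)\,d\xi$, integrate by parts to rewrite the hypothesis as $\int_a^b[\beta(x)-A(x)]h'(x)\,dx=0$, and then invoke the earlier lemma to conclude $\beta-A$ is constant, hence $\beta'=\alpha$. Your write-up is in fact slightly more careful than the paper's, since you explicitly check the vanishing of the boundary term and the continuity of $\beta-A$.
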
 
\begin{proof}
Setting
$$A(x) = \int_a^x \alpha(\zeta) \ d\zeta,$$
and integrating by parts, we find that
$$\int_a^b \alpha(x)h(x) \ dx = - \int_a^b A(x)h'(x) \ dx,$$
i.e., the integral within the lemma may be rewritten as
$$\int_a^b [-A(x)+\beta(x)]h'(x) \ dx = 0.$$
According to Lemma 4.2, this implies that
$$B'(x)=\alpha(x),$$
for all $x$ in $[a,b]$, as asserted. Note that $\beta(x)$ was not assumed to be differentiable but results from Lemma 4.2.
\end{proof}

Let $J[y]$ be a functional defined on some normed linear space, and let
$$\Delta J[h] = J[y+h] -J[y]$$
 Be its increment , corresponding to the increment of $h=h(x)$ of the "independent variable" $y=y(x)$. If $y$ is fixed, $\Delta J[h]$ is a functional of $h$, that is, it is a nonlinear functional, similar to that of the term $f(x+h)-f(x)$ within  Newton's difference quotient.

Now suppose that
$$\Delta J[h] = \phi [h] + \epsilon ||h||,$$
where $\phi [h]$ is a linear functional and $\epsilon \rightarrow 0$ as $||h|| \rightarrow 0$ The functional $J[y]$ is said to be \textit{differentiable} and the \textit{principle linear part} of the increment $\Delta J[h]$ (i.e. the linear functional $\phi[h]$) and is called the \textit{variation} (or differential) of $J[y]$ and is denoted by $\delta J[y]$.\footnote[4]{Taking $\epsilon \rightarrow 0$ and $||h|| \rightarrow 0$ is similar to taking having $h \rightarrow 0$ in $\frac{f(x+h)-f(x)}{h}$.}

\begin{theorem}
A necessary condition for the differentiable function $J[y]$ to have an extremeum for $y=\hat{y}$  is that its variation to vanish for $y=\hat{y}$, i.e. that
$$\delta J[h] = 0$$
for $y=\hat{y}$   and all admissable $h$.
\end{theorem}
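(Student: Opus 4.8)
The plan is to reduce the problem to the familiar one-variable situation and then invoke the ordinary necessary condition for an extremum of a real function. The only tools needed are the two defining properties of the variation: its linearity in the increment (inherited from the fact that $\delta J[h] = \phi[h]$ is, by hypothesis, a linear functional) and the error estimate built into the definition of differentiability, namely that $\Delta J[h] = \phi[h] + \epsilon ||h||$ with $\epsilon \to 0$ as $||h|| \to 0$.

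First I would argue by contradiction. Suppose $J$ attains, say, a minimum at $y = \hat{y}$, yet $\delta J[h_0] \neq 0$ for some admissible increment $h_0$; replacing $h_0$ by $-h_0$ if necessary, we may assume $\delta J[h_0] = \phi[h_0] = c > 0$. The idea is to probe $J$ along the one-parameter family $\hat{y} + \alpha h_0$ for small real $\alpha$, noting that each such function is again admissible and that $\alpha h_0$ is a legitimate increment to which the differentiability expansion applies.

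Next I would substitute $h = \alpha h_0$ into that expansion. Using linearity we have $\phi[\alpha h_0] = \alpha c$, and $||\alpha h_0|| = |\alpha|\,||h_0||$, so that
$$\Delta J[\alpha h_0] = J[\hat{y} + \alpha h_0] - J[\hat{y}] = \alpha c + \epsilon |\alpha|\,||h_0||,$$
where $\epsilon \to 0$ as $\alpha \to 0$. The crux is that the linear term $\alpha c$ dominates the remainder $\epsilon |\alpha|\,||h_0||$ once $|\alpha|$ is small: dividing through by $\alpha$ gives
$$\alpha^{-1}\,\Delta J[\alpha h_0] = c + \tfrac{|\alpha|}{\alpha}\,\epsilon\,||h_0||,$$
and since $c > 0$ is fixed while $\epsilon\,||h_0|| \to 0$, the right-hand side stays positive for all sufficiently small $|\alpha|$. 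Hence $\Delta J[\alpha h_0]$ has the same sign as $\alpha$ near $0$: it is positive for small $\alpha > 0$ but negative for small $\alpha < 0$.

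This negativity of the increment for small negative $\alpha$ means $J[\hat{y} + \alpha h_0] < J[\hat{y}]$ arbitrarily close to $\hat{y}$, contradicting minimality (the maximum case is identical with all inequalities reversed). I expect the single genuine obstacle to be exactly this last estimate---cleanly controlling the remainder so as to extract a definite sign change near $\alpha = 0$; everything preceding it is formal bookkeeping with the definitions. Since the admissible increment $h_0$ was arbitrary, we conclude that $\delta J[h] = 0$ at $y = \hat{y}$ for every admissible $h$, as asserted.
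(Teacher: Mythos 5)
Your argument is correct, and it is the standard proof of this theorem: probe $J$ along the one-parameter family $\hat{y}+\alpha h_0$, use linearity to write $\Delta J[\alpha h_0]=\alpha\,\phi[h_0]+\epsilon|\alpha|\,||h_0||$, and observe that the linear term dominates the remainder for small $|\alpha|$, so the increment changes sign with $\alpha$ unless $\phi[h_0]=0$. Be aware, though, that the paper itself states this theorem \emph{without} proof --- it follows the statement only with an informal analogy to the finite-dimensional condition that $\Delta F$ not change sign near a relative extremum --- so your write-up supplies a justification that the paper omits rather than paralleling an argument it contains. Two points worth making explicit if you polish this: first, you implicitly use that $\alpha h_0$ is again an admissible increment for every small real $\alpha$, which holds because the admissible increments form a linear space (conditions such as $h(a)=h(b)=0$ are preserved under scaling); second, the quantity $\epsilon$ in the definition of differentiability depends on the increment, so the clean way to finish is to note that for $|\alpha|$ small enough one has $|\epsilon|\,||h_0||<c$, whence $\Delta J[\alpha h_0]$ has the same sign as $\alpha c$ --- exactly the estimate you identified as the crux. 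With those remarks, the proof is complete and correct.
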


\noindent To analogize with Analysis, recall the following:

Let $F(x_1, \ \dots, \ x_n)$ be a differentiable function of $n$-variables. Then $F(x_1, \ \dots, \ x_n)$ is said to have a relative extremum at the point $(\hat{x_1}, \ \dots, \ \hat{x_n})$ if 
$$\Delta F = F(x_1, \ \dots, \ x_n) - F(\hat{x_1}, \ \dots, \ \hat{x_n})$$
has the same sign for all points $(x_1, \ \dots, \ x_n)$ belonging to some neighborhood of $(\hat{x_1}, \ \dots, \ \hat{x_n})$, where the extrema $ F(\hat{x_1}, \ \dots, \ \hat{x_n})$ is minimum if $\Delta F \geq 0$ and maximum if $\Delta F \leq 0$.

Similarly, we say the functional $J[y]$ has a (relative) extremum for $y=\hat{y}$ if $J[y] - J[\hat{y}]$ \textit{does not} change sign in some neighborhood of the curve $y=\hat{y}(x)$.

\section{The Euler-Lagrange Equation}
We now outline a method for finding the extremum of the simplest type of functional, developed by Euler and Lagrange. This method is our main tool for obtaining geodesics.

\noindent Lets assume we must face the following problem:

\

Let $F(x,y,z)$ be a function with continuous first and second (partial) derivatives with respect to all its arguments. Then among all functions $y(x)$ which are continuously differentiable for $a \leq x \leq b$ and satisfy the boundary conditions $y(a) = A$ and $y(b)=B$, find the function for which the functional
$$J[y] = \int_a^b F(x,y,y') dx$$
has a relative minimum.

\

Let us begin our solution. Suppose we give $y(x)$ an increment $h(x)$, where, in order for the function
$$y(x)+h(x)$$
to continue to satisfy the boundary conditions, we must have $h(a)=h(b)=0$. Then, since the corresponding increment of the functional equals
$$\Delta J = J[y+h] - J[y]$$
$$= \int_a^b F(x,y+h,y'+h') \ dx - \int_a^b F(x,y,y')  \  dx$$
$$= \int_a^b \left[ F(x,y+h,y'+h') - F(x,y,y') \right] \  dx$$
it follows from Taylor's Theorem that
$$\Delta J = \int_a^b \left[ h \frac{\partial}{\partial y} F(x,y,y') + h' \frac{\partial}{\partial y'} F(x,y,y') \right] \  dx + \dots$$
where the ``$\dots$'' denote terms of order higher than $1$ relative to $h$ and $h'$. The integrand on the right hand side represents the principle linear part of $\Delta J$, hence the variation of $J[y]$ is
$$\delta J = \int_a^b \left[ h \frac{\partial}{\partial y} F(x,y,y') + h' \frac{\partial}{\partial y'} F(x,y,y') \right] \  dx .$$
According to Theorem 4.5, a necessary condition for $J[y]$ to have an extremum at $y=y(x)$ is that
$$\delta J = \int_a^b \left[ h F_y+ h'  F_{y'} \right] \  dx = 0 $$
for all admissible $h$. But, according to Lemma 4.4, this implies that
$$F_y- \frac{d}{dx} F_{y'} = 0$$
a result known as the Euler-Lagrange Equation.

\

\noindent Thus, we have proved

\begin{theorem}
Let J[y] be a functional of the form
$$\int_a^b F(x,y,y') \ dx,$$
defined on the set of functions $y(x)\in D_1(a,b)$, i.e., which have continuous first derivatives in $[a,b]$, and satisfy the boundary conditions $y(a)=A$, $y(b)=B$. Then a necessary condition for $J[y]$ to have an extremum for a given function $y(x)$ is that $y(x)$ satisfy the Euler-Lagrange equation
$$F_y- \frac{d}{dx} F_{y'} = 0.$$
\end{theorem}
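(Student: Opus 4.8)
The plan is to follow the standard variational recipe: perturb the candidate extremal $y(x)$ by an admissible increment, isolate the principal linear part of the resulting change in $J$, invoke Theorem 4.5 to force that linear part to vanish, and then extract the pointwise differential equation by means of Lemma 4.4. The whole argument is essentially an orchestration of the four lemmas of Section 4 together with the extremum criterion, so the real work lies in setting up the increment correctly and choosing the \emph{right} lemma at the end.

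First I would fix an admissible increment $h(x) \in D_1(a,b)$ with $h(a) = h(b) = 0$; this boundary condition is exactly what guarantees that $y + h$ remains in the admissible class, staying continuously differentiable and preserving the endpoint values $y(a) = A$, $y(b) = B$. I would then write the increment as
$$\Delta J = \int_a^b \left[ F(x, y+h, y'+h') - F(x,y,y') \right] dx,$$
and apply Taylor's theorem to the integrand in its second and third slots, using the hypothesis that $F$ has continuous first and second partial derivatives. This yields
$$\Delta J = \int_a^b \left[ h F_y + h' F_{y'} \right] dx + R[h],$$
where $R[h]$ gathers the quadratic-and-higher remainder. The delicate point of this step is to verify that $R[h]$ is genuinely of higher order, namely $R[h] = \epsilon \, \|h\|_1$ with $\epsilon \to 0$ as $\|h\|_1 \to 0$; the uniform continuity of the second derivatives of $F$ on the relevant compact set is precisely what secures this remainder estimate. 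By the definition of the variation from the previous section, the surviving first integral is then exactly $\delta J[h]$.

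Next I would apply Theorem 4.5: since $y$ is assumed to furnish an extremum, its variation must vanish, so
$$\delta J[h] = \int_a^b \left[ h F_y + h' F_{y'} \right] dx = 0$$
for every admissible $h$. Setting $\alpha(x) = F_y(x,y,y')$ and $\beta(x) = F_{y'}(x,y,y')$, both of which are continuous along the curve by our smoothness assumptions on $F$, this is verbatim the hypothesis of Lemma 4.4. The lemma then delivers that $\beta$ is differentiable with $\beta'(x) = \alpha(x)$, that is, $\frac{d}{dx} F_{y'} = F_y$, which is the Euler--Lagrange equation.

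I expect the main obstacle to be the invocation of Lemma 4.4 rather than the more tempting Lemma 4.1. The naive route would integrate the term $h' F_{y'}$ by parts first and then appeal to Lemma 4.1, but this is illegitimate, because we have \emph{not} assumed that $F_{y'}$ is differentiable as a function of $x$ along $y$; differentiating it would require knowing that $y$ itself is twice differentiable, which is nowhere in the hypotheses. The virtue of Lemma 4.4 is exactly that it treats the mixed expression $\int_a^b (\alpha h + \beta h')\,dx$ with no a priori differentiability of $\beta$, and in fact \emph{produces} the differentiability of $F_{y'}$ as part of its conclusion. Getting this logical ordering right, so that we derive rather than presuppose the regularity of $F_{y'}$, is the crux of the argument.
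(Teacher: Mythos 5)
Your proposal is correct and follows essentially the same route as the paper: Taylor-expand the increment $\Delta J$, identify $\int_a^b (hF_y + h'F_{y'})\,dx$ as the variation, invoke Theorem 4.5 to make it vanish for all admissible $h$, and conclude via Lemma 4.4. Your added remarks on the remainder estimate and on why Lemma 4.4 must be used instead of integrating by parts and appealing to Lemma 4.1 are sound refinements of the same argument, not a different one.
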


The curves satisfying the Euler-Lagrange equation are called \textit{extremals}. Since it is a second-order differential equation, its solution will in general depend on two arbitrary constants, which are determined from the boundary conditions $y(a)=A$ and $y(b)=B$.

\section{Geodesics}

Suppose we have a surface $\mathcal{S}$ defined by the vector equation
\begin{equation}\label{vec}
\vec{r}(u,v)=x(u,v)\hat{i}+y(u,v)\hat{j}+z(u,v)\hat{k}.
\end{equation}
The shortest curve lying on $\mathcal{S}$ and connecting two points of $\mathcal{S}$ is called the \textit{geodesic} connecting the two points. We are now well prepared to begin our discussion of obtaining this curve.

\

The geodesic curve lying on surface $\mathcal{S}$ can be specified by the equations
$$u=u(t) \ \ \ \ \ v=v(t)$$
and can be found by minimizing the arc length integral
$$L \ = \ \displaystyle{\int_{t_0}^{t_1}} \frac{ds}{dt} \ dt = \displaystyle{\int_{t_0}^{t_1}} \ \sqrt{\left( \frac{dx}{dt} \right)^2+{\left( \frac{dy}{dt} \right) }^2 + {\left( \frac{dz}{dt} \right)}^2} \ dt$$
But, since $x$, $y$, and $z$ are functions of more than one variable, chain rule states
$$\frac{dx}{dt}=\frac{\partial x}{\partial u}\frac{du}{dt} + \frac{\partial x}{\partial v}\frac{dv}{dt}$$
$$\left( \frac{dx}{dt} \right) ^2 = \left( \frac{\partial x}{\partial u} \right) ^2 \left( \frac{du}{dt} \right) ^2 + 2 \ \frac{\partial x}{\partial u} \frac{\partial x}{\partial v} \frac{du}{dt}\frac{dv}{dt}+\left( \frac{\partial x}{\partial v} \right) ^2 \left( \frac{dv}{dt} \right) ^2$$
and similarly for $\displaystyle{\left( \frac{dy}{dt} \right) ^2}$ and $\displaystyle{\left( \frac{dz}{dt} \right) ^2}$. This results in
\begin{eqnarray*}
L &=& \int_{t_0}^{t_1} \bigg\{ \bigg[ \left( \frac{\partial x}{\partial u} \right) ^2 +\left( \frac{\partial y}{\partial u} \right) ^2 + \left( \frac{\partial z}{\partial u} \right) ^2 \bigg] \left( \frac{du}{dt} \right) ^2 \\
\; \;  \;  \;  \; \;  \; \; \;  \;  &+& 2 \ \bigg[ \frac{\partial x}{\partial u} \frac{\partial x}{\partial v} + \frac{\partial y}{\partial u}  \frac{\partial y}{\partial v} + \frac{\partial z}{\partial u}  \frac{\partial z}{\partial v} \bigg]\frac{du}{dt} \frac{dv}{dt} \\
\; \;  \;  \;  \; \;  \; \; \;  \;  &+& \bigg[ \left( \frac{\partial x}{\partial v} \right) ^2 +\left( \frac{\partial y}{\partial v} \right) ^2 + \left( \frac{\partial z}{\partial v} \right) ^2 \bigg] \left( \frac{dv}{dt} \right) ^2 \bigg\}^{\frac{1}{2}} \ dt.
\end{eqnarray*} 
Which can be rewritten as
$$J[u,v]=\int_{t_0}^{t_1} \sqrt{Eu'^2+2Fu'v'+Gv'^2} \ dt,$$
where $E$, $F$, and $G$ are the coefficients of the \textit{first fundamental (quadradic) form of the surface}, i.e.,
$$\begin{cases} \displaystyle{E=\vec{r}_u\cdot \vec{r}_u = \left( \frac{\partial x}{\partial u} \right) ^2 +\left( \frac{\partial y}{\partial u} \right) ^2 + \left( \frac{\partial z}{\partial u} \right) ^2} \\  \\ \displaystyle{F=\vec{r}_u \cdot \vec{r}_v = \frac{\partial x}{\partial u} \frac{\partial x}{\partial v} + \frac{\partial y}{\partial u}  \frac{\partial y}{\partial v} + \frac{\partial z}{\partial u}  \frac{\partial z}{\partial v}}\\ \\  \displaystyle{G=\vec{r}_v\cdot \vec{r}_v=\left( \frac{\partial x}{\partial v} \right) ^2 +\left( \frac{\partial y}{\partial v} \right) ^2 + \left( \frac{\partial z}{\partial v} \right) ^2}. \end{cases}$$

The Euler-Lagrange equation in this case corresponds to the two different equations
$$\displaystyle{ F_u- \frac{d}{dt} F_{u'} = 0,} \ \ \ \ \ \ \ \displaystyle{F_v- \frac{d}{dt} F_{v'} = 0,}$$
hence, we obtain
\begin{equation}\label{geodesiceq1}
\frac{E_u u'^2+2F_u u' v' + G_u v'^2}{\sqrt{Eu'^2+2Fu'v'+Gv'^2}} - \frac{d}{dt} \frac{2(Eu'+Fv')}{\sqrt{Eu'^2+2Fu'v'+Gv'^2}} = 0,
\end{equation}
\begin{equation}\label{geodesiceq2}
\frac{E_v u'^2+2F_v u' v' + G_v v'^2}{\sqrt{Eu'^2+2Fu'v'+Gv'^2}} - \frac{d}{dt} \frac{2(Fu'+Gv')}{\sqrt{Eu'^2+2Fu'v'+Gv'^2}} = 0.
\end{equation}

Which are the two differential equations whose solutions provide the geodesic on surface $\mathcal{S}$. \\

The above case has no restrictions due to the parametrization $u=u(t)$ and $v=v(t)$. Let us quickly observe a different approach with a single restriction, although resulting in helpful special cases.

Let once more the surface $\mathcal{S}$ be given by the vector equation \ref{vec}. In terms of the differentials of $u$ and $v$, the square of the differential of arc length may be wrriten
\begin{eqnarray*}
(ds)^2&=&(dx)^2+(dy)^2+(dz)^2 \\
&=&Eu'^2+2Fu'v'+Gv'^2
\end{eqnarray*}
If the given fixed points on the surface are $(u_1,v_1)$ and $(u_2,v_2)$, with $u_2>u_1$, and we limit our consideration to arcs whos equations are expressible in the form
\begin{equation}
v=v(u) \ \ \ \ \ v(u_1) \leq v \leq v(u_2),
\end{equation}
the functional would be instead given by
\begin{equation}
J[v(u)] = \int_{u_1}^{u_2}  ds = \int_{u_1}^{u_2} \sqrt{E+2Fv'+Gv'^2} \ du
\end{equation}
where $v'=v'(u)$ designates the derivative $\frac{dv}{du}$ (hence $u'=\frac{du}{du}=1$). With $u$ and $v$ playing the roles of $x$ and $y$, the Euler-Lagrange equation now becomes
$$F_v- \frac{d}{du} F_{v'} = 0$$
\begin{equation}
\frac{E_v+2v'F_v+v'^2G_v}{2\sqrt{E+2Fv'+Gv'^2}} - \frac{d}{du} \left( \frac{F+Gv'}{\sqrt{E+2Fv'+Gv'^2}} \right) =0.
\end{equation}
Now we can observe some special cases resulting from this different approach.

\begin{remark}
In the case when $E$, $F$, and $G$ are explicit functions of $u$ only, we have
\begin{equation}
\frac{F+Gv'}{\sqrt{E+2Fv'+Gv'^2}} = c_1,
\end{equation}
which can be solved for $v'$. Doing this gives us
\begin{eqnarray*}
&&v'=\frac{1}{2G(G-{c_1}^2)} \bigg[ 2F({c_1}^2-G) \\
&& \pm \sqrt{ 4F^2(G-{c_1}^2)^2-4G(G-{c_1}^2)(F^2-E{c_1}^2) } \bigg].
\end{eqnarray*}
\end{remark}

\begin{remark}
In the case where $E$ and $G$ are explicit functions of $u$ only and $F=0$, that is, the grid lines of $u$ and $v$ are perpendicular, we have
$$v'=\frac{\sqrt{4G(G-{c_1}^2)E{c_1}^2}}{2G(G-{c_1}^2)}=c_1 \sqrt{\frac{E}{G(G-{c_1}^2)}},$$
so we get
\begin{equation}
v=c_1 \int_{u_1}^{u_2} c_1 \sqrt{\frac{E}{G(G-{c_1}^2)}} \ du.
\end{equation}
\end{remark}

\begin{remark}
Similarly, in the case where $E$ and $G$ are explicit functions of $v$ only and $F=0$, then
$$\frac{E_v+v'^2G_v}{2\sqrt{E+Gv'^2}}-\frac{d}{du} \left( \frac{Gv'}{\sqrt{E+Gv'^2}} \right) =0,$$
so
$$ E_v + v'^2G_v - 2 G \sqrt{E+Gv'^2} \bigg[ \frac{v''}{\sqrt{E+Gv'^2}} + \left( \frac{1}{2} \right) \frac{v'(2Gv'v'')}{(E+Gv'^2)^{\frac{3}{2}}} \bigg] =0 $$
$$E_v + v'^2 G_v - 2Gv'' + \frac{2G^2v'^2v''}{E+Gv'^2} = 0 $$
\begin{equation}
\frac{Gv'^2}{\sqrt{E+Gv'^2}} - \sqrt{E+Gv'^2} = c_1 
\end{equation}
which can be made even more helpful by noting $\displaystyle{v'= \frac{dv}{du}}$, giving us
$$Gv'^2-(E+Gv'^2)=c_1 \sqrt{E+Gv'^2}$$
$$\left( - \frac{E}{c_1} \right) ^2 = E + Gv'^2$$
$$\frac{E^2-{c_1}^2E}{G{c_1}^2} = v'^2,$$
finally providing
\begin{equation}
u={c_1} \int_{v_1=v(u_1)}^{v_2=v(u_2)} \sqrt{ \frac{G}{E^2-{c_1}^2E }} \ dv
\end{equation}
\end{remark}

\begin{example}
For a surface obtained through revolving $y=f(x)$ about the $x$-axis over the interval $a \leq x \leq b$ parametrized by
$$x=u \ \ \ \ \ y=f(u) \ cos(v) \ \ \ \ \ z=f(u) \ sin(v)$$
where $a \leq u \leq b$ and $0 \leq v \leq 2 \pi$, the first fundamental form would be
$$E=1+(f'(u))^2 \ \ \ \ \ G=(f(u))^2 \ \ \ \ \ F=0,$$
and by applying the result of Remark 6.2, the geodesic would be given by
\begin{equation}
v=c_1 \int_{u_1}^{u_2} \frac{\sqrt{1+(f'(u))^2}}{f(u)\sqrt{(f(u))^2 - {c_1}^2}} \ du.
\end{equation}
\end{example}

\

\begin{example}
Consider the circular cylinder
$$\vec{r}(\varphi, z)=(a \ cos\varphi)\hat{i} + (a \ sin\varphi)\hat{j} + z\hat{k}.$$

\noindent Note the coeffecients of the first fundamental form of the cylinder are
$$E=a^2 \ \ \ \ \ F=0 \ \ \ \ \ G=1,$$
therefore the geodesics of the cylinder have the equations, according to the first method,
$$\frac{d}{dt} \frac{a^2 \varphi'}{\sqrt{a^2\varphi'^2+z'^2}} = 0, \ \ \ \ \ \ \ \frac{d}{dt} \frac{z'}{\sqrt{a^2\varphi'^2+z'^2}} = 0,$$
i.e.,
$$\frac{a^2 \varphi'}{\sqrt{a^2\varphi'^2+z'^2}} = C_1, \ \ \ \ \ \ \ \frac{z'}{\sqrt{a^2\varphi'^2+z'^2}} = C_2.$$
Dividing the second equation by the first, we obtain
$$\frac{dz}{d\varphi} = c_1,$$
which has the solution
$$z=c_1 \varphi + c_2,$$
representing a two-parameter family of helical lines lying on the cylinder.
\end{example}

\begin{example}
Now, consider a sphere of radius $a$ given by the vector equation
\begin{equation}\label{sphere}
\vec{r}(u,v)=(a \ sin(v) \ cos(u))\hat{i} + (a \ sin(v) \ sin(u))\hat{j} + (a \ cos(v))\hat{k}
\end{equation}
where it should be simple to notice that $u$ and $v$ form orthogonal grid lines, due to them representing the latitude and longitude, respectively. The first fundamental form of this sphere is
$$E=a^2 \ sin^2(v) \ \ \ \ \ G=a^2 \ \ \ \ \ F=0.$$
Using the result of Remark 6.3, we have
\begin{eqnarray*}
u &=& c_1 \int \frac{dv}{\sqrt{a^2 \ sin^4(v) - {c_1}^2 \ sin^2(v)}} \\
&=& \int \frac{csc^2 (v) \ dv}{\sqrt{ \left( \left( a/{c_1} \right)^2 - 1 \right) - cot^2(v) }} \\
&=& -sin^{-1} \left( \frac{cot(v)}{\sqrt{(a/c_1)^2 -1}} \right) + c_2
\end{eqnarray*}
when rearranged provides
$$(sin(c_2))a \ sin(v) \ cos(u) - (cos(c_2))a \ sin(v) \ sin(u) - \frac{a \ cos(v)}{\sqrt{(a/c_1)^2 - 1}} = 0.$$
Lastly, upon which noting the parametric equations in equation \ref{sphere}, we may change the above to see that the geodesic of a sphere lies on
$$x \ sin(c_2) - y \ cos(c_2) - \frac{z}{\sqrt{(a/c_1)^2-1}}=0,$$
which is a plane passing through the center of the sphere. Hence, the shortest arc connecting two points on the surface of a sphere is the intersection of the sphere with the plane containing the two points and the center of the sphere, known as the \textit{great-circle arc}.
\end{example}

The concept of a geodesic can be defined not only for surface, by also for higher-dimensional manifolds. Find the geodesics of an $n$-dimensional manifold reduces to solving a variational problem for a functional depending on $n$ functions.

\nocite{*}

\printbibliography

\end{document}